\documentclass[12pt]{article}
\usepackage{amssymb}
\usepackage{amsmath}
\usepackage{amsthm}

\newtheorem{theorem}{Theorem}
\newtheorem{corollary}[theorem]{Corollary}

\newtheorem{proposition}[theorem]{Proposition}

\newtheorem{defin}[theorem]{Definition}
\newenvironment{definition}{\begin{defin}\normalfont\quad}{\end{defin}}
\newtheorem{remar}[theorem]{Remark}
\newenvironment{remark}{\begin{remar}\normalfont\quad}{\end{remar}}


\begin{document}

\title{Motzkin numbers, central trinomial coefficients and hybrid polynomials%
}
\date{}
\author{P.Blasiak$^{a}$, G. Dattoli$^{b}$, A.Horzela$^{a}$, K.A. Penson$%
^{c}$ and K.Zhukovsky$^{b}$ \\\\
$^a$H.Niewodnicza\'nski Institute of Nuclear Physics,\\
Polish Academy of Sciences\\
ul. Eliasza-Radzikowskiego 152, PL 31342 Krak\'ow, Poland\\
$^b$ENEA, Dipartimento Innovazione, \\
Divisione Fisica Applicata Centro Ricerche Frascati,\\
Via E. Fermi 45, I 00044 Frascati, Rome, Italy\\
$^c$Laboratoire de Physique Th\'eorique de la Mati\`{e}re Condens\'{e}e\\
Universit\'e Pierre et Marie Curie, CNRS UMR 7600\\
Tour 24 - 2i\`{e}me \'et., 4 pl. \!Jussieu, F 75252 Paris Cedex 05, France}
\maketitle

\begin{abstract}
We show that the formalism of hybrid polynomials, interpolating between
Hermite and Laguerre polynomials, is very useful in the study of Motzkin
numbers and central trinomial coefficients. These sequences are identified
as special values of hybrid polynomials, a fact which we use to derive their
generalized forms and new identities satisfied by them.

\bigskip

\noindent \textbf{Keywords:} 
central trinomial coefficients, Motzkin numbers, Hermite-Kamp\'e de F\'eri\'et polynomials.

\bigskip

\noindent \textbf{AMS 2000 MSC Scheme:} 11B83, 05A19, 33C45.
\end{abstract}

\section{Introduction}

The central trinomial coefficients (CTC) $c_{n}$ are defined as the
coefficients of $x^n$ in the expansion of $
(1+x+x^{2})^{n}$. Various expressions have been given for these coefficients
(see, for example, \cite{[1], [2]}); here we will refer to the
following form, see A002426 and A001006 in \cite{[12]}:

\begin{equation}  \label{eq1}
c_{n} = {\sum\limits_{k = 0}^{{\left[ {{\frac{{n}}{{2}}}} \right]}} {{\frac{{%
n!}}{{\left( {k!} \right)^{2}(n - 2k)!}}}}} \mathrm{,}
\end{equation}
which is the most useful for our purposes. An alternative approach
according to which one can define the central trinomial coefficients is to
follow \cite{[9]} and to consider the Laurent polynomial

\begin{equation}  \label{eq1a}
(1+x+x^{-1})^{n} = \sum\limits_{j=-n}^n{\binom{n}{j}}_2x^j \mathrm{,}
\end{equation}
where the appropriate trinomial coefficients ${\binom{n}{j}}_{2}$ are given by:

\begin{equation}
{\binom{n}{m}}_{2}=\sum\limits_{j\geq 0}\frac{\displaystyle n!}{\displaystyle
j!(m+j)!(n-2j-m)!}\mathrm{.}  \label{eq1b}
\end{equation}
Comparing Eqs.(\ref{eq1}) and (\ref{eq1b}) one immediately derives
 $c_{n}={\binom{n}{0}}_{2}$.

The Motzkin numbers (MN) are connected to the number of planar paths
associated with the combinatorial interpretation of $c_{n}$. They are
defined as follows (see \cite{[1], [2]}):

\begin{equation}
m_{n}={\sum\limits_{k=0}^{{\left[ {{\frac{{n}}{{2}}}}\right] }}{{\frac{{n!}}{%
{k!(k+1)!(n-2k)!}}}}}\mathrm{.}  \label{eq2}
\end{equation}

Similarly to the central trinomial coefficients also the Motzkin numbers can
be expressed in terms of the coefficients ${\binom{n}{m}}_{2}$ simply as
follows:

\begin{equation}
m_{n}={{{\frac{{1}}{{n+1}}}}}\binom{n+1}{1}_{2}\mathrm{.}  \label{eq2a}
\end{equation}

In the next sections we shall demonstrate that the Motzkin numbers $m_{n}$
and the central trinomial coefficients $c_{n}$ can be treated on the same footing
and framed within the context of the theory of the hybrid polynomials, (see
\cite{[4]}). Recalling basic properties of the hybrid polynomials
interpolating between standard two-variable Hermite and Laguerre polynomials
we shall show that the central trinomial coefficients and the Motzkin
numbers satisfy a simple recurrence which relates $c_{n+1}$, $c_{n}$ and $%
m_{n-1}$. Moreover, the methods developed on the base on the hybrid
polynomials formalism allow natural generalization of the notions of the
central trinomial coefficients and the Motzkin numbers which is useful for
the investigation of their properties.

\begin{definition}
\\
The Hermite-Kamp\'{e} de F\'{e}ri\'{e}t (HKdF) polynomials are defined by
the following expression
\begin{equation}  \label{eq3}
H_{n} (x,y) = n!{\sum\limits_{k = 0}^{{\left[ {{\frac{{n}}{{2}}}} \right]}} {%
{\frac{{x^{n - 2k}y^{k}}}{{k!(n - 2k)!}}}}} \quad \mathrm{,}
\end{equation}
where $x,y \in C$. For special values of $x$ and $y$ the HKdF
polynomials reduce to the well known ordinary Hermite polynomials {\rm \cite{[10]}
\begin{equation}  \label{eq4}
H_{n} (x, - {\frac{1}{2}}) = He_{n} (x),\quad H_{n} (2x, - 1) = H_{n} (x)%
\mathrm{,}
\end{equation}
 $H_{n}(x) = 2^{\frac{n}{2}}He_{n} (\sqrt{2}x)$}.
\end{definition}

\begin{remark}
\\The HKdF polynomials can be also defined through the following
operational rules:
\begin{equation}
H_{n}(x,y)=\exp (y{\frac{{\partial ^{2}}}{{\partial x^{2}}}})\cdot x^{n}%
\mathrm{,}  \label{eq5}
\end{equation}
\begin{equation}
H_{n}(x,y)=(x+2y{\frac{{\partial }}{{\partial x}}})^{n}\cdot \mathbf{1}%
\mathrm{,}  \label{eq6}
\end{equation}
and the relevant exponential generating function:
\begin{equation}  \label{eq7}
{\sum\limits_{n = 0}^{\infty} {{\frac{{t^{n}}}{{n!}}}}} H_{n} (x,y) = \exp
(xt + yt^{2})\mathrm{.}
\end{equation}
Other properties of the HKdF polynomials can be found in the
review \cite{[5]}.
\end{remark}

\begin{definition}
\\
The two-variable Laguerre polynomials are defined as follows (see {\rm \cite{[6]}}):
\begin{equation}
L_{n}(x,y)=n!{\sum\limits_{k=0}^{n}{{\frac{{(-1)^{k}y^{n-k}x^{k}}}{{\left( {%
k!}\right) ^{2}(n-k)!}}}}}\mathrm{.}  \label{eq8}
\end{equation}
They reduce to the ordinary Laguerre polynomials for the value of
the argument $y=1$.
\end{definition}
\begin{remark}
\\
The two-variable Laguerre polynomials (\ref{eq8}) are also defined by the
operational rule
\begin{equation}
L_{n}(x,y)=(y-\widehat{{D}}_{x}^{-1})^{n}\mathbf{1}={\sum\limits_{k=0}^{n}{%
\left( {{\begin{array}{*{20}c} {n} \hfill \\ {k} \hfill \\ \end{array}}}%
\right) (}}-1)^{k}y^{n-k}\widehat{{D}}_{x}^{-k}\mathbf{1}\mathrm{,}
\label{eq9}
\end{equation}
where $\widehat{{D}}_{x}^{-1}$ is the inverse derivative operator
whose action on the unity is given as follows:%
\begin{equation}
\widehat{{D}}_{x}^{-k}\mathbf{1}={\frac{{x^{k}}}{{k!}}}\mathrm{.}
\label{eq10}
\end{equation}
\end{remark}
Indeed, substituting Eq.(\ref{eq10}) into Eq.(\ref{eq9}) we
immediately recover Eq.(\ref{eq8}) in the following form:

\bigskip
\begin{equation}
L_{n}(x,y)={\sum\limits_{k=0}^{n}{\left( {{\begin{array}{*{20}c} {n} \hfill
\\ {k} \hfill \\ \end{array}}}\right) {\frac{{(-1)^{k}y^{n-k}x^{k}}}{{k!}}}}}%
\mathrm{.}  \label{eq11}
\end{equation}
Hereby, we note that according to \cite{[4]} the inverse
derivative operator action on a function $f\left( x\right) $ is specified as
follows:
\begin{equation}
\widehat{{D}}_{x}^{-k}f\left( x\right) ={\frac{{1}}{\left( {k-1}\right) {!}}}%
\int\limits_{0}^{x}\left( x-\xi \right) ^{k-1}f\left( \xi \right) d\xi ,%
\mathrm{\ }\left( k=1,2,3,...\right) ,  \label{eq10a}
\end{equation}
and we specify its zeroth order action on the function  $f\left(
x\right) $ by the function itself:
\begin{equation}
\widehat{{D}}_{x}^{0}\cdot f\left( x\right) ={f}\left( x\right) \mathrm{.}
\label{eq10b}
\end{equation}
Next we will introduce the hybrid Hermite-Laguerre polynomials
combining the individual characteristics of both Laguerre and Hermite
polynomials and explore their properties in the context of the central
trinomial coefficients and Motzkin numbers.
\begin{definition}
\\
The hybrid Hermite-Laguerre polynomials $\Pi _{n}(x,y)$ are defined by the
following expression:
\begin{equation}  \label{eq12}
\Pi _{n}(x,y)=H_{n}(y,\widehat{{D}}_{x}^{-1})\mathbf{1}\mathrm{.}
\end{equation}
\end{definition}
\begin{proposition}
The central trinomial coefficients are the particular case of the
hybrid Hermite-Laguerre polynomials:
\begin{equation}  \label{eq13}
c_{n} = \Pi _{n} (1,1)\mathrm{.}
\end{equation}
\end{proposition}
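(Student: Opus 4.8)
The plan is to reduce the claim to the explicit series representation of $\Pi_n$ and then match it term by term against the definition~(\ref{eq1}) of $c_n$. First I would expand the right-hand side of~(\ref{eq12}) by substituting $x\mapsto y$ and $y\mapsto\widehat{D}_x^{-1}$ into the defining series~(\ref{eq3}) of the HKdF polynomials; since $y$ is a scalar parameter commuting with $\widehat{D}_x^{-1}$, and since the sum in~(\ref{eq3}) already terminates at $k=\left[\frac{n}{2}\right]$ so that no convergence question arises, this carries over verbatim to give
\begin{equation*}
\Pi_n(x,y)=H_n(y,\widehat{D}_x^{-1})\mathbf{1}=n!\sum_{k=0}^{\left[\frac{n}{2}\right]}\frac{y^{n-2k}}{k!(n-2k)!}\,\widehat{D}_x^{-k}\mathbf{1}\mathrm{.}
\end{equation*}

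Next I would invoke the action rule~(\ref{eq10}), namely $\widehat{D}_x^{-k}\mathbf{1}=x^k/k!$ (together with~(\ref{eq10b}) for the $k=0$ term), to eliminate the operator entirely and obtain the closed form
\begin{equation*}
\Pi_n(x,y)=n!\sum_{k=0}^{\left[\frac{n}{2}\right]}\frac{y^{n-2k}x^k}{(k!)^2(n-2k)!}\mathrm{.}
\end{equation*}
This identity is worth recording in its own right, as it is presumably the representation reused for the Motzkin numbers and for the generalized sequences later in the paper.

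Finally I would specialize to $x=y=1$, which collapses the previous display to
\begin{equation*}
\Pi_n(1,1)=\sum_{k=0}^{\left[\frac{n}{2}\right]}\frac{n!}{(k!)^2(n-2k)!}\mathrm{,}
\end{equation*}
and this is precisely the defining formula~(\ref{eq1}) for $c_n$, completing the argument. There is essentially no hard step: the only point deserving a word of justification is the legitimacy of inserting the operator $\widehat{D}_x^{-1}$ into the second slot of the polynomial $H_n$, which is immediate from the series form~(\ref{eq3}) once the action of $\widehat{D}_x^{-k}$ on $\mathbf{1}$ is fixed by~(\ref{eq10}) and~(\ref{eq10b}).
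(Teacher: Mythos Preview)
Your proposal is correct and follows essentially the same route as the paper: expand $H_n(y,\widehat{D}_x^{-1})\mathbf{1}$ via the defining series~(\ref{eq3}), apply the rule~(\ref{eq10}) to obtain the closed form $\Pi_n(x,y)=n!\sum_{k=0}^{[n/2]}\frac{y^{n-2k}x^k}{(k!)^2(n-2k)!}$ (which is exactly the paper's Eq.~(\ref{eq14})), and then set $x=y=1$ to recover~(\ref{eq1}). Your added remarks on the legitimacy of the operator substitution and the $k=0$ case via~(\ref{eq10b}) are fine but not strictly needed, as the paper treats these as immediate.
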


\begin{proof}
Note that from the definition of HKdF and from Eq.(\ref{eq10}), we find
\begin{equation}
\Pi _{n}(x,y)=n!{\sum\limits_{k=0}^{{\left[ {{\frac{{n}}{{2}}}}\right] }}{{%
\frac{{y^{n-2k}}\widehat{{D}}{_{x}^{-k}}}{{k!(n-2k)!}}}}}\mathbf{1}={%
n!\sum\limits_{k=0}^{{\left[ {{\frac{{n}}{{2}}}}\right] }}{{\frac{{%
y^{n-2k}x^{k}}}{{\left( {k!}\right) ^{2}(n-2k)!}}}}}  \label{eq14}
\end{equation}
and therefore the comparison of Eq.(\ref{eq14}) with Eq.(\ref
{eq1}) yields Eq.(\ref{eq13}).
\end{proof}

\section{Central trinomial coefficients and special functions}

In this Section we will focus our attention on some properties of the
central trinomial coefficients and the calculation of their generating
function.
\begin{definition}
\\
$I_{0}$ denotes the zeroth order modified Bessel function of the
first kind. $I_{n}\left( x\right) $ is defined as (see {\rm \cite{[8]}}):
\begin{equation}
I_{n}(x)={\sum\limits_{r=0}^{\infty }{{\frac{{\left( {{\frac{{x}}{{2}}}}%
\right) ^{n+2r}}}{{r!(n+r)!}}}}}\text{\textrm{,}}  \label{eq16}
\end{equation}
which is a particular case of the Tricomi function of $\alpha
^{th}$ order where the parameter $\alpha $ is not necessarily an integer:
\begin{equation}  \label{eq16a}
C_{\alpha} (x) = {\sum\limits_{r = 0}^{\infty} {{\frac{{x^{r}}}{{r!\Gamma (r
+ \alpha + 1)}}}}} = x^{ - {\frac{{\alpha} }{{2}}}}I_{\alpha} (2\sqrt {x} )%
\mathrm{.}
\end{equation}
\end{definition}
\begin{proposition}
The exponential generating function for the CTC is given by:
\begin{equation}  \label{eq15}
{\sum\limits_{n = 0}^{\infty} {{\frac{{t^{n}}}{{n!}}}}} c_{n} = \exp
(t)I_{0} (2t)\mathrm{.}
\end{equation}
\end{proposition}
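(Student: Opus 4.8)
The plan is to build directly on the closed form obtained above. By \eqref{eq13} we have $c_{n}=\Pi_{n}(1,1)$, and setting $x=y=1$ in \eqref{eq14} gives the explicit sum $c_{n}=n!\sum_{k=0}^{[n/2]}\frac{1}{(k!)^{2}(n-2k)!}$. Substituting this into the left-hand side of \eqref{eq15} turns the exponential generating function into the double series $\sum_{n\ge 0}\frac{t^{n}}{n!}c_{n}=\sum_{n\ge 0}t^{n}\sum_{k=0}^{[n/2]}\frac{1}{(k!)^{2}(n-2k)!}$.

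The key step is then a reindexing: set $m=n-2k\ge 0$, so that the constrained double sum over $(n,k)$ becomes an unconstrained double sum over $(k,m)$, which factorizes, $\sum_{k\ge 0}\sum_{m\ge 0}\frac{t^{2k+m}}{(k!)^{2}\,m!}=\Bigl(\sum_{k\ge 0}\frac{t^{2k}}{(k!)^{2}}\Bigr)\Bigl(\sum_{m\ge 0}\frac{t^{m}}{m!}\Bigr)$. The second factor is $\exp(t)$, while the first is exactly $I_{0}(2t)$ by \eqref{eq16} with $n=0$ and argument $2t$ (equivalently $C_{0}(t^{2})$ via \eqref{eq16a}). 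Multiplying the two factors yields \eqref{eq15}.

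An alternative route, more in the operational spirit of the paper, is to apply the HKdF generating function \eqref{eq7} at the operator level: from $\Pi_{n}(x,y)=H_{n}(y,\widehat{D}_{x}^{-1})\mathbf{1}$ one gets $\sum_{n\ge 0}\frac{t^{n}}{n!}\Pi_{n}(x,y)=\exp\bigl(yt+\widehat{D}_{x}^{-1}t^{2}\bigr)\mathbf{1}=\exp(yt)\sum_{k\ge 0}\frac{t^{2k}}{k!}\widehat{D}_{x}^{-k}\mathbf{1}=\exp(yt)\sum_{k\ge 0}\frac{(t^{2}x)^{k}}{(k!)^{2}}=\exp(yt)\,C_{0}(t^{2}x)$, and then specialize to $x=y=1$. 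The only point deserving a word of justification is the rearrangement and interchange of the summations; but since $\exp$ and $I_{0}$ are entire, all the series involved converge absolutely for every $t\in\mathbb{C}$, so these manipulations are legitimate and no real obstacle arises. I therefore expect this to be a short computation rather than a proof with a hard core.
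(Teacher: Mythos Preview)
Your proposal is correct. In fact your ``alternative route'' is precisely the paper's own proof: the paper applies the HKdF generating function \eqref{eq7} with $y$ replaced by the operator $\widehat{D}_{x}^{-1}$, disentangles the exponential (since $y$ and $\widehat{D}_{x}^{-1}$ commute), expands $\exp(\widehat{D}_{x}^{-1}t^{2})\mathbf{1}$ term by term using \eqref{eq10}, and then specializes to $x=y=1$. Your first route---substituting the explicit sum for $c_{n}$, reindexing $m=n-2k$, and factorizing the double series---is a clean, entirely elementary argument that bypasses the operational machinery altogether; it buys you a self-contained proof that does not rely on the operator identity \eqref{eq7} at the operator level, at the modest cost of not yielding the general $\Pi_{n}(x,y)$ generating function along the way. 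Either is fine here, and your remark about absolute convergence adequately covers the only analytic point.
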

\begin{proof}
Using the definition of Eq.(\ref{eq12}) and the generating function (\ref{eq7}) of the HKdF polynomials we obtain:
\begin{equation}
{\sum\limits_{n=0}^{\infty }{{\frac{{t^{n}}}{{n!}}}}}\Pi _{n}(x,y)={%
\sum\limits_{n=0}^{\infty }{{\frac{{t^{n}}}{{n!}}}}}H_{n}(y,\widehat{{D}}%
_{x}^{-1})\mathbf{1}=\exp (yt+\widehat{{D}}_{x}^{-1}t^{2})\mathbf{1}\mathrm{.%
}  \label{eq17}
\end{equation}
The exponential on the r.h.s of Eq.(\ref{eq17}) can be
disentangled because ${y}$ and $\widehat{{D}}_{x}^{-1}$ commute. Thus we get:
\begin{equation}
\exp (yt)\exp (\widehat{{D}}_{x}^{-1}t^{2})\mathbf{1}=\exp (yt){%
\sum\limits_{r=0}^{\infty }{{\frac{\widehat{{D}}{_{x}^{-r}t^{2r}}}{{r!}}}}}%
\mathbf{1}=\exp (yt){\sum\limits_{r=0}^{\infty }{{\frac{{x^{r}t^{2r}}}{{%
\left( {r!}\right) ^{2}}}}}}\mathrm{.}  \label{eq18}
\end{equation}
hen Eq.(\ref{eq15}) follows from Eqs.(\ref{eq17}), (\ref{eq18}), (\ref%
{eq16}) and (\ref{eq13}) and the proposition is proved.
\end{proof}
\begin{proposition}
The central trinomial coefficient can be expressed in terms of
Legendre polynomials $P_{n} (x)$:
\begin{equation}  \label{eq20}
c_{n} = i^{n}\sqrt {3^{n}} P_{n} ( - {\frac{{i}}{\sqrt {3} }})\mathrm{.}
\end{equation}
\end{proposition}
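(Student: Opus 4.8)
The plan is to express both sides of \eqref{eq20} as explicit finite sums and verify they agree term by term. First I would recall the standard hypergeometric-type formula for the Legendre polynomials, namely
\begin{equation}
P_{n}(x) = \frac{1}{2^{n}}\sum_{k=0}^{\left[\frac{n}{2}\right]} (-1)^{k} \binom{n}{k}\binom{2n-2k}{n} x^{n-2k}\mathrm{,}
\end{equation}
or equivalently the form $P_{n}(x) = \sum_{k} \binom{n}{k}\binom{n+k}{k}\left(\frac{x-1}{2}\right)^{k}$. Either way, substituting $x = -i/\sqrt{3}$ and multiplying by $i^{n}\sqrt{3^{n}}$ should, after collecting powers of $i$ and of $3$, reproduce exactly the sum in \eqref{eq1} that defines $c_{n}$.

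Carrying this out: with the first formula above, the general term of $i^{n}\sqrt{3^{n}}\,P_{n}(-i/\sqrt{3})$ is
\begin{equation}
\frac{i^{n}\sqrt{3^{n}}}{2^{n}}(-1)^{k}\binom{n}{k}\binom{2n-2k}{n}\left(-\frac{i}{\sqrt 3}\right)^{n-2k}\mathrm{.}
\end{equation}
The factor $\sqrt{3^{n}}\,(\sqrt{3})^{-(n-2k)} = 3^{k}$ clears the square roots, and $i^{n}\cdot i^{n-2k}(-1)^{n-2k} = i^{2n-2k}(-1)^{n} = (-1)^{n-k}(-1)^{n} = (-1)^{k}$ (up to the $(-1)^k$ already present), so the troublesome imaginary and sign factors collapse to a plain positive quantity. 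After simplification the term becomes $\dfrac{3^{k}}{2^{n}}\binom{n}{k}\binom{2n-2k}{n}$ times a manageable sign, and one then checks that $\dfrac{1}{2^{n}}\binom{n}{k}\binom{2n-2k}{n} = \dfrac{(2n-2k)!}{2^{n}\,k!\,(n-k)!\,(n-2k)!\,n!}\cdot n!$ does not yet match; the cleaner route is to instead start from the Rodrigues-free hypergeometric representation $P_{n}(x) = {}_{2}F_{1}(-n, n+1; 1; (1-x)/2)$ and recognize the resulting series in $c_{n}$ directly.

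An alternative and in my view smoother approach avoids binomial bookkeeping entirely: I would use the known connection between Legendre polynomials and the two-variable Hermite (HKdF) polynomials, namely an identity of the type $P_{n}(x) = \frac{1}{n!}\,H_{n}\!\big(\text{something linear in }x,\text{ something quadratic}\big)$ obtained from the generating function $\sum_{n} P_{n}(x) t^{n} = (1 - 2xt + t^{2})^{-1/2}$. Comparing $(1 - 2xt + t^{2})^{-1/2}$ evaluated at $x = -i/\sqrt 3$, $t \mapsto it\sqrt 3$, with the generating function $\sum t^{n} c_{n}/n! $ is not quite right since one is ordinary and the other exponential; but the \emph{ordinary} generating function of $c_{n}$ is $\sum_n c_n t^n = (1 - 2t - 3t^2)^{-1/2}$, and the substitution $t \mapsto it/\sqrt 3$ into the Legendre generating function gives $(1 + 2it x/\sqrt3 \cdot(-i\sqrt3) \cdots)$ — so I would simply verify the generating-function identity
\begin{equation}
\sum_{n=0}^{\infty} c_{n} t^{n} = \frac{1}{\sqrt{1 - 2t - 3t^{2}}} = \sum_{n=0}^{\infty} P_{n}\!\left(-\frac{i}{\sqrt3}\right) (it\sqrt3)^{n}\mathrm{,}
\end{equation}
which reduces to checking $1 - 2(it\sqrt3)\big(\!-\tfrac{i}{\sqrt3}\big) + (it\sqrt3)^{2} = 1 - 2t - 3t^{2}$, an elementary identity. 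Equating coefficients of $t^n$ then yields \eqref{eq20}.

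The main obstacle is bookkeeping with the factors of $i$ and $\sqrt 3$: getting the phases right so that the manifestly real, positive integer $c_{n}$ emerges from an expression containing $i^{n}$ and $\sqrt{3^{n}}$. The generating-function route sidesteps the per-term sign chase and reduces everything to one quadratic identity, so I expect to present that; the first, direct term-by-term comparison is available as a backup but requires care with the double binomial sum.
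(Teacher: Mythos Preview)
Your generating-function argument is correct and is essentially the paper's own proof: the paper compares the ordinary generating function of the hybrid polynomials, $\sum_n t^n\Pi_n(x,y)=(1-2yt+(y^2-4x)t^2)^{-1/2}$, with the Legendre generating function to obtain the general identity $\Pi_n(x,y)=(y^2-4x)^{n/2}P_n\!\big(y/\sqrt{y^2-4x}\big)$, and then sets $x=y=1$. You skip the two-variable step and match $\sum_n c_n t^n=(1-2t-3t^2)^{-1/2}$ against the Legendre generating function directly, which is the same idea specialized; just be sure to justify that ordinary generating function for $c_n$ (in the paper it is the $x=y=1$ case of the quoted hybrid-polynomial result).
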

\begin{proof}
As it has been shown in \cite{[6]}, hybrid polynomials $\Pi _{n} (x,y)$ have the following
ordinary generating function:
\begin{equation}  \label{eq21}
{\sum\limits_{n = 0}^{\infty} {t^{n}}} \Pi _{n} (x,y) = {\frac{{1}}{\sqrt {1
- 2yt + (y^{2} - 4x)t^{2}} }},\quad {\left| {\sqrt {y^{2} - 4x} t} \right|}
< 1\mathrm{.}
\end{equation}
Since Legendre polynomials satisfy the analogous relation (see \cite{[7]}) written below:
\begin{equation}  \label{eq22}
{\sum\limits_{n = 0}^{\infty} {t^{n}}} P_{n} (x) = {\frac{{1}}{\sqrt {1 -
2xt + t^{2}} }},\quad {\left| {t} \right|} < 1\mathrm{,}
\end{equation}
we can easily rearrange the summation in (\ref{eq21}) to obtain
\begin{equation}  \label{eq23}
\Pi _{n} (x,y) = (y^{2} - 4x)^{{\frac{{n}}{{2}}}}P_{n} \left({\frac{{y}}{%
\sqrt {y^{2} - 4x} }}\right)\mathrm{,}
\end{equation}
which, on account of Eq.(\ref{eq13}), yields Eq.(\ref{eq20}).
\end{proof}
\begin{corollary}
The central trinomial coefficients satisfy the following recurrence \cite{PUMA}
\begin{equation}  \label{eq24}
(n + 1)c_{n + 1} = (2n + 1)c_{n} + 3nc_{n - 1} \mathrm{.}
\end{equation}
\end{corollary}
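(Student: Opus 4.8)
The plan is to derive the three-term recurrence~(\ref{eq24}) from the Legendre-polynomial identification~(\ref{eq20}) established in the preceding proposition, using the classical recurrence for Legendre polynomials. Recall that the $P_n$ satisfy
\begin{equation}
(n+1)P_{n+1}(z) = (2n+1)\,z\,P_{n}(z) - n\,P_{n-1}(z)\mathrm{,}
\end{equation}
which can be read off by differentiating the generating function~(\ref{eq22}) with respect to $t$. The strategy is simply to substitute $z = -i/\sqrt{3}$ and $c_n = i^n\sqrt{3^n}\,P_n(z)$ into this identity and collect the powers of $i\sqrt{3}$.

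First I would write $P_n(z) = i^{-n} 3^{-n/2} c_n$ with $z=-i/\sqrt3$. Substituting into the Legendre recurrence gives
\begin{equation}
(n+1)\,i^{-(n+1)}3^{-(n+1)/2} c_{n+1} = (2n+1)\left(-\tfrac{i}{\sqrt3}\right) i^{-n}3^{-n/2} c_n - n\, i^{-(n-1)}3^{-(n-1)/2} c_{n-1}\mathrm{.}
\end{equation}
Then I would multiply both sides by $i^{n+1}3^{(n+1)/2}$ to clear the prefactors. On the right-hand side the first term picks up a factor $i^{n+1}3^{(n+1)/2}\cdot(-i/\sqrt3)\cdot i^{-n}3^{-n/2} = -i^2\cdot 3^{1/2}\cdot 3^{-1/2} = 1$, giving $(2n+1)c_n$; the second term picks up $-n\cdot i^{n+1}3^{(n+1)/2}\cdot i^{-(n-1)}3^{-(n-1)/2} = -n\, i^{2}\cdot 3 = 3n$, giving $+3n\,c_{n-1}$. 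Hence $(n+1)c_{n+1} = (2n+1)c_n + 3n c_{n-1}$, which is exactly~(\ref{eq24}).

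Alternatively, and perhaps more transparently, one can avoid invoking the Legendre recurrence as a known fact and instead work directly from the generating function. Set $G(t) = \sum_{n\ge0} c_n t^n/n!$... but in fact the \emph{ordinary} generating function is cleaner here: from~(\ref{eq21}) with $(x,y)=(1,1)$ and~(\ref{eq22}) we have $\sum_n c_n t^n \cdot(\text{normalization})$; more precisely $\sum_n \Pi_n(1,1) t^n = (1-2t-3t^2)^{-1/2}$. Differentiating $f(t) = (1-2t-3t^2)^{-1/2}$ one finds $(1-2t-3t^2)f'(t) = (1+3t)f(t)$, and comparing coefficients of $t^n$ in this ODE yields a recurrence among $\Pi_{n+1}(1,1)$, $\Pi_n(1,1)$, $\Pi_{n-1}(1,1)$; passing from $\Pi_n(1,1)$ to $c_n$ is trivial since they are equal, but the presence of the $n!$ in the exponential generating function is what converts the $\Pi$-recurrence into the stated $n$-weighted form~(\ref{eq24}) — this is exactly the manoeuvre already used to pass between~(\ref{eq20}) and Legendre's recurrence.

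I do not anticipate a genuine obstacle; the proof is a routine substitution. The one place to be careful is bookkeeping of the $i$ and $\sqrt3$ factors — getting the sign of the $c_{n-1}$ term right (it flips from $-n$ to $+3n$ precisely because $i^2=-1$ combines with the $-n$ coefficient) and checking that the argument $-i/\sqrt3$ is consistent with the branch of the square root used in~(\ref{eq20}). It is also worth noting the degenerate low-order cases ($n=0$, where the $c_{n-1}$ term is absent) agree with $c_0 = c_1 = 1$, $c_2 = 3$, as a sanity check.
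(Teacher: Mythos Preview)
Your proposal is correct and follows exactly the paper's approach: the paper's proof simply states that Eq.~(\ref{eq24}) follows from the identification~(\ref{eq20}) together with the Legendre recurrence $(n+1)P_{n+1}(x)=(2n+1)xP_n(x)-nP_{n-1}(x)$, and you have carried out the substitution and bookkeeping of the $i$ and $\sqrt{3}$ factors explicitly and correctly. Your alternative sketch via the ordinary generating function $(1-2t-3t^2)^{-1/2}$ is also valid, though not the route the paper takes.
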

\begin{proof}
Eq.(\ref{eq24}) follows from Eq.(\ref{eq20}) and from the well known
recurrence for the Legendre polynomials \cite{[7]}:
\begin{equation}  \label{eq25}
(n + 1)P_{n + 1} (x) = (2n + 1)xP_{n} (x) - nP_{n - 1} (x)\mathrm{.}
\end{equation}
\end{proof}
\noindent So far, we have shown that the central trinomial coefficients can
be written in terms of Legendre polynomials. 
\\
For alternative derivation of the results of this section see \cite{AB}.
\\
In the next section we will
demonstrate that analogous relations can be obtained for the Motzkin numbers too.

\section{Motzkin numbers and special functions}

In this section we concentrate on the calculation of the generating
function for the associated hybrid polynomials, which will be defined below,
and we study their properties related to the Motzkin numbers.

\begin{definition}
\\
Associated CTC are defined by
\begin{equation}  \label{eq26}
c_{n}^{\alpha} = {\sum\limits_{k = 0}^{{\left[ {{\frac{{n}}{{2}}}} \right]}}
{{\frac{{n!}}{{(n - 2k)!k!\Gamma (k + \alpha + 1)}}}}} \mathrm{.}
\end{equation}
and the Motzkin numbers can be identified as a particular case of
the associated CTC:
\begin{equation}  \label{eq27}
m_{n} = c_{n}^{1} \mathrm{.}
\end{equation}
\end{definition}
\begin{definition}
\\
Recall the operator $\widehat{{D}}_{x,\alpha }^{-1}$ defined in {\rm \cite{[4]}} via the following rule for its action on the unity:
\begin{equation}
\widehat{{D}}_{x,\alpha }^{-n}\mathbf{1} ={\frac{{x^{n}}}{{\Gamma (n+\alpha
+1)}}}\mathrm{.}  \label{eq28}
\end{equation}
\end{definition}
\begin{definition}
\\
The associated hybrid Hermite-Laguerre polynomials $\Pi _{n}^{(\alpha )}
(x,y)$ are defined as follows:
\begin{equation}  \label{eq29}
\Pi _{n}^{\alpha }(x,y)=H_{n}(y,\widehat{{D}}_{x,\alpha }^{-1})\mathbf{1}=n!{%
\sum\limits_{kr=0}^{{\left[ {{\frac{{n}}{{2}}}}\right] }}{{\frac{{%
x^{k}y^{n-2k}}}{{(n-2k)!k!\Gamma (k+\alpha +1)}}}}}\mathrm{.}
\end{equation}
\end{definition}
\begin{proposition}
The associated hybrid polynomials $\Pi _{n}^{(\alpha )} (x,y)$ possess the
following generating function:
\begin{equation}  \label{eq30}
{\sum\limits_{n = 0}^{\infty} {{\frac{{t^{n}}}{{n!}}}}} \Pi _{n}^{\alpha}
(x,y) = \exp (yt)(xt^{2})^{ - {\frac{{\alpha} }{{2}}}}I_{\alpha} (2t\sqrt {x}
)\mathrm{.}
\end{equation}
\end{proposition}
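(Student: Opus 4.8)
The plan is to compute the exponential generating function directly from the operational definition (\ref{eq29}), mirroring the argument already used for Proposition in the CTC case (Eqs.(\ref{eq17})--(\ref{eq18})). First I would write
\[
\sum_{n=0}^{\infty}\frac{t^{n}}{n!}\,\Pi_{n}^{\alpha}(x,y)
=\sum_{n=0}^{\infty}\frac{t^{n}}{n!}\,H_{n}(y,\widehat{D}_{x,\alpha}^{-1})\mathbf{1}
=\exp\!\bigl(yt+\widehat{D}_{x,\alpha}^{-1}t^{2}\bigr)\mathbf{1},
\]
invoking the HKdF exponential generating function (\ref{eq7}) with the second argument replaced by the operator $\widehat{D}_{x,\alpha}^{-1}$. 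This substitution is the standard umbral move underlying the whole hybrid-polynomial formalism, and it is legitimate here because the series for $H_n$ only ever applies nonnegative integer powers of its second argument, so no question of convergence or ordering beyond the formal power series arises.

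Next, since the scalar $y$ and the operator $\widehat{D}_{x,\alpha}^{-1}$ commute, I would disentangle the exponential as $\exp(yt)\exp(\widehat{D}_{x,\alpha}^{-1}t^{2})\mathbf{1}$, expand the second factor as a power series, and apply it to $\mathbf 1$ term by term using the defining rule (\ref{eq28}):
\[
\exp\!\bigl(\widehat{D}_{x,\alpha}^{-1}t^{2}\bigr)\mathbf{1}
=\sum_{r=0}^{\infty}\frac{t^{2r}}{r!}\,\widehat{D}_{x,\alpha}^{-r}\mathbf{1}
=\sum_{r=0}^{\infty}\frac{t^{2r}}{r!}\cdot\frac{x^{r}}{\Gamma(r+\alpha+1)}.
\]
Then I would recognize the resulting series $\sum_{r\ge 0}\frac{(xt^{2})^{r}}{r!\,\Gamma(r+\alpha+1)}$ as precisely $C_{\alpha}(xt^{2})$, the Tricomi function of order $\alpha$ from Definition (\ref{eq16a}), and rewrite it via the identity $C_{\alpha}(z)=z^{-\alpha/2}I_{\alpha}(2\sqrt{z})$ with $z=xt^{2}$, which gives $(xt^{2})^{-\alpha/2}I_{\alpha}(2t\sqrt{x})$. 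Multiplying back the factor $\exp(yt)$ yields exactly the right-hand side of (\ref{eq30}).

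I do not anticipate a serious obstacle; the argument is essentially a templated repetition of the $\alpha=0$ case already carried out, with the Tricomi function supplying the needed generalization of $I_0$. The only point deserving a line of care is the branch/normalization bookkeeping in the step $C_{\alpha}(xt^{2})=(xt^{2})^{-\alpha/2}I_{\alpha}(2t\sqrt{x})$ when $\alpha$ is not an integer, but this is just the content of Eq.(\ref{eq16a}) applied verbatim, so it requires no new work. One could also sanity-check the result by specializing $\alpha=0$, $x=y=1$, which collapses (\ref{eq30}) to $\exp(t)I_{0}(2t)$ in agreement with (\ref{eq15}), and $\alpha=1$, $x=y=1$, which via (\ref{eq27}) produces the exponential generating function $t^{-1}\exp(t)I_{1}(2t)$ for the Motzkin numbers.
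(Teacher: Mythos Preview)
Your proposal is correct and follows essentially the same route as the paper: apply the HKdF generating function (\ref{eq7}) to the operational definition (\ref{eq29}), disentangle $\exp(yt)$ from $\exp(\widehat{D}_{x,\alpha}^{-1}t^{2})$, act on $\mathbf{1}$ via (\ref{eq28}), and identify the resulting series with $C_{\alpha}(xt^{2})=(xt^{2})^{-\alpha/2}I_{\alpha}(2t\sqrt{x})$. The paper's proof is more terse, omitting the explicit Tricomi-function identification and the sanity checks, but the logical content is the same.
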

\begin{proof}
Using Eq.(\ref{eq29}) and the generating function for the HKdF polynomials
Eq.(\ref{eq7}) we find that
\begin{equation}
{\sum\limits_{n=0}^{\infty }{{\frac{{t^{n}}}{{n!}}}}}\Pi _{n}^{\alpha
}(x,y)=\exp (yt+\widehat{{D}}_{x,\alpha }^{-1}t^{2})\mathbf{1}=\exp (yt){%
\sum\limits_{r=0}^{\infty }{{\frac{\widehat{{D}}{_{x,\alpha }^{-r}t^{2r}}}{{%
r!}}}}}\mathbf{1}\mathrm{,}  \label{eq31}
\end{equation}
which yields Eq.(\ref{eq30}) with account of Eq.(\ref{eq28}).
\end{proof}
\begin{corollary}
The MN can be identified as the particular case of the associated hybrid
Hermite-Laguerre polynomials $\Pi _{n}^{(\alpha )}(x,y)$
\begin{equation}  \label{eq32}
m_{n} = \Pi _{n}^{1} (1,1)\mathrm{,}
\end{equation}
and satisfy the following identity:
\begin{equation}  \label{eq33}
{\sum\limits_{n = 0}^{\infty} {{\frac{{t^{n}}}{{n!}}}}} \Pi _{n}^{1} (1,1) =
{\frac{{\exp (t)}}{{t}}}I_{1} (2t)\mathrm{.}
\end{equation}
\end{corollary}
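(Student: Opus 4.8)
The plan is to obtain both assertions of the corollary as immediate specializations of results already established for the associated hybrid polynomials, taking $\alpha=1$ and $x=y=1$ throughout.

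First I would prove Eq.(\ref{eq32}). Putting $\alpha=1$ and $x=y=1$ in the series representation Eq.(\ref{eq29}) gives
\[
\Pi_n^1(1,1)=n!\sum_{k=0}^{[n/2]}\frac{1}{(n-2k)!\,k!\,\Gamma(k+2)}.
\]
Since $\Gamma(k+2)=(k+1)!$, the right-hand side is exactly the defining sum Eq.(\ref{eq2}) for $m_n$; equivalently, comparing Eq.(\ref{eq29}) with the definition Eq.(\ref{eq26}) of the associated CTC shows that $\Pi_n^\alpha(1,1)=c_n^\alpha$ for every $\alpha$, and then the identification Eq.(\ref{eq27}) yields $\Pi_n^1(1,1)=c_n^1=m_n$.

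Next I would prove Eq.(\ref{eq33}). Substituting $\alpha=1$ and $x=y=1$ into the generating function Eq.(\ref{eq30}) gives
\[
\sum_{n=0}^{\infty}\frac{t^n}{n!}\,\Pi_n^1(1,1)=\exp(t)\,(t^2)^{-1/2}I_1(2t)=\frac{\exp(t)}{t}I_1(2t),
\]
the last equality being the simplification $(t^2)^{-1/2}=1/t$. Combining this with Eq.(\ref{eq32}) also records, as a byproduct, the exponential generating function $\sum_{n\ge0}\frac{t^n}{n!}m_n=\exp(t)I_1(2t)/t$, the analogue for Motzkin numbers of Eq.(\ref{eq15}).

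I do not expect any substantial obstacle here: the whole argument consists of inserting particular parameter values into the Definitions and the Proposition proved just above. The only point that deserves a word of care is the passage $(t^2)^{-1/2}=1/t$ together with the observation that, by Eq.(\ref{eq16}), $I_1(2t)/t=\sum_{r\ge0}t^{2r}/\bigl(r!(r+1)!\bigr)$ is an entire function of $t$ (the factor $t$ in the prefactor cancels the leading $t$ of $I_1(2t)$), so that Eq.(\ref{eq33}) is a genuine identity of formal power series — equivalently, of analytic functions in a neighbourhood of $t=0$ — and not merely a pointwise relation valid for $t>0$.
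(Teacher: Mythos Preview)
Your proposal is correct and matches the paper's intent: the paper states this result as a corollary without giving an explicit proof, and your argument --- specializing Eq.(\ref{eq29}) and Eq.(\ref{eq30}) at $\alpha=1$, $x=y=1$ and invoking Eqs.(\ref{eq2}), (\ref{eq26}), (\ref{eq27}) --- is precisely the intended derivation. Your additional remark that $I_1(2t)/t$ is entire, so that Eq.(\ref{eq33}) holds as an identity of power series, is a welcome clarification not made explicit in the paper.
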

\noindent It is now evident that many of the properties of the CTC and of
the MN can be derived from those of the hybrid polynomials.
\begin{theorem}
The MN and the CTC are linked by the recurrence \cite{PUMA}
\begin{equation}  \label{eq34}
c_{n + 1} = c_{n} + 2n \cdot m_{n - 1}\ \ .
\end{equation}
\end{theorem}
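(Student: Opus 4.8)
The plan is to derive the identity \eqref{eq34} by comparing exponential generating functions, exploiting the fact that both $c_n$ and $m_{n-1}$ have been identified with special values of (associated) hybrid polynomials whose generating functions are already in hand. Let me write $G(t)=\sum_{n\ge 0}\frac{t^n}{n!}c_n=\exp(t)I_0(2t)$ from Proposition~\ref{...} (Eq.~\eqref{eq15}) and $M(t)=\sum_{n\ge 0}\frac{t^n}{n!}m_n=\frac{\exp(t)}{t}I_1(2t)$ from Corollary~\ref{...} (Eq.~\eqref{eq33}). The recurrence $c_{n+1}=c_n+2n\,m_{n-1}$ is linear with polynomial coefficients, so it should translate into a first-order linear ODE relating $G$ and $M$; the strategy is to write down that ODE, verify it using the known closed forms in terms of modified Bessel functions, and then read off \eqref{eq34} by matching coefficients of $t^n/n!$.

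Concretely, multiply \eqref{eq34} by $t^n/n!$ and sum over $n\ge 1$. The left side gives $\sum_{n\ge 1}\frac{t^n}{n!}c_{n+1}=\frac{d}{dt}\bigl(G(t)-c_0\bigr)-\ $ (a shift bookkeeping term), i.e.\ essentially $G'(t)$ up to the lowest-order terms; the term $\sum \frac{t^n}{n!}c_n$ is $G(t)-c_0$; and $\sum_{n\ge 1}\frac{t^n}{n!}2n\,m_{n-1}=2t\sum_{n\ge 1}\frac{t^{n-1}}{(n-1)!}m_{n-1}=2t\,M(t)$. So the claimed recurrence is equivalent to the differential relation
\[
G'(t)=G(t)+2t\,M(t),
\]
after one checks the finitely many low-order coefficients ($n=0$) separately, which is immediate since $c_0=c_1=1$ and the $n=0$ instance of \eqref{eq34} reads $c_1=c_0+0$. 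Thus the whole theorem reduces to verifying $G'=G+2tM$.

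The remaining step is to confirm $\frac{d}{dt}\bigl(e^t I_0(2t)\bigr)=e^t I_0(2t)+2t\cdot\frac{e^t}{t}I_1(2t)$, i.e.\ $e^t\bigl(I_0(2t)+2I_0'(2t)\bigr)=e^t I_0(2t)+2e^t I_1(2t)$, which collapses to the standard Bessel derivative identity $I_0'(z)=I_1(z)$ (with $z=2t$). This is the only analytic input and it is completely classical, so there is no real obstacle; the main thing to be careful about is the index bookkeeping when passing between the recurrence and the ODE — making sure the shift $c_{n+1}\leftrightarrow G'$ and the shift $m_{n-1}\leftrightarrow tM$ are done consistently, and that the degenerate low-order terms match. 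An alternative, equally short route avoids Bessel functions entirely: use the series definitions \eqref{eq1} and \eqref{eq2} directly. Writing $c_{n+1}-c_n=\sum_k \frac{(n+1)!}{(k!)^2(n+1-2k)!}-\sum_k \frac{n!}{(k!)^2(n-2k)!}$, combine the two sums over a common range and simplify the summand; the factor $\frac{(n+1)!}{(n+1-2k)!}-\frac{n!}{(n-2k)!}=\frac{n!}{(n+1-2k)!}\bigl((n+1)-(n+1-2k)\bigr)=\frac{2k\,n!}{(n+1-2k)!}$ telescopes the difference into $\sum_{k\ge 1}\frac{2k\,n!}{(k!)^2(n+1-2k)!}=2\sum_{k\ge 1}\frac{n!}{k!(k-1)!(n+1-2k)!}$, and reindexing $k\to k+1$ turns this into $2n\sum_{j\ge 0}\frac{(n-1)!}{(j+1)!\,j!\,(n-1-2j)!}=2n\,m_{n-1}$ by \eqref{eq2}. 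Either derivation works; I would present the generating-function version first since it fits the paper's theme of exploiting the hybrid-polynomial generating functions, and mention the combinatorial identity as a remark.
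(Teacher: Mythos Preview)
Your proposal is correct: both the generating-function argument (reducing to $I_0'=I_1$) and the direct series manipulation are valid, and the bookkeeping you flag is easily handled.

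The paper, however, takes a different route that stays inside the hybrid-polynomial formalism rather than passing through the Bessel closed forms. It invokes the standard HKdF recurrence $H_{n+1}(x,y)=xH_n(x,y)+2nyH_{n-1}(x,y)$ (their Eq.~(35), which has a typo---the factor $x$ is dropped), applies it with the operator substitution $x\mapsto y$, $y\mapsto\widehat D_x^{-1}$, and then uses the operational identity $\widehat D_x^{-1}H_n(y,\widehat D_x^{-1})\mathbf 1=x\,\Pi_n^{1}(x,y)$ to obtain the two-variable recurrence $\Pi_{n+1}(x,y)=y\,\Pi_n(x,y)+2nx\,\Pi_{n-1}^{1}(x,y)$; specializing $x=y=1$ gives the theorem. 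Your generating-function approach is really the integrated shadow of this same recurrence (the HKdF three-term relation is exactly what produces $I_0'=I_1$ on the exponential-generating-function side), but it has the virtue of being self-contained once Eqs.~(15) and~(33) are available. Your direct sum manipulation is the most elementary of the three arguments and needs nothing beyond the definitions~(1) and~(2). What the paper's approach buys is the full two-variable identity~(38), from which the generalized recurrence~(41) for the $p$-associated CTC follows immediately; your arguments prove only the $x=y=1$ specialization and would need to be rerun to get Corollary~20.
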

\begin{proof}
The HKdF polynomials satisfy the following recurrence relation \cite{[4]}:
\begin{equation}  \label{eq35}
H_{n + 1} (x,y) = H_{n} (x,y) + 2ynH_{n - 1} (x,y)\mathrm{.}
\end{equation}
The same recurrence, written in operational form for the hybrid
case, reads as follows:
\begin{equation}
H_{n+1}(y,\widehat{{D}}_{x}^{-1})\mathbf{1}=\left[H_{n}(y,\widehat{{D}}%
_{x}^{-1})+2\widehat{{D}}_{x}^{-1}nH_{n-1}(y,\widehat{{D}}_{x}^{-1})\right]%
\mathbf{1}\mathrm{.}  \label{eq36}
\end{equation}
Then, employing the result of the action of the inverse derivative
on the $H_{n}(y,\widehat{D}_{x}^{-1})\mathbf{1}$ as written below
\begin{equation}
\widehat{{D}}_{x}^{-1}H_{n}(y,\widehat{{D}}_{x}^{-1})\mathbf{1}=x\Pi
_{n}^{1}(x,y)\mathrm{,}  \label{eq37}
\end{equation}
we find from (\ref{eq35}) the following recurrence:
\begin{equation}  \label{eq38}
\Pi _{n + 1} (x,y) = \Pi _{n} (x,y) + 2nx\Pi _{n - 1}^{1} (x,y)\mathrm{.}
\end{equation}
Hence, we have proved also the particular case of this identity, given by
Eq.(\ref{eq34}).
\end{proof}
\begin{corollary}
The MN can be expressed in terms of the central trinomial coefficients as
follows:
\begin{equation}  \label{eq39}
m_{n} = {\frac{{c_{n + 2} - c_{n + 1}} }{{2(n + 1)}}}\mathrm{.}
\end{equation}
\end{corollary}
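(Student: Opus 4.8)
The plan is to obtain the identity as a direct algebraic consequence of the recurrence of the preceding Theorem, namely Eq.~(\ref{eq34}), by a simple shift of the summation index. No new analytic input is needed: the entire content of the statement is already encoded in the relation linking $c_{n+1}$, $c_n$ and $m_{n-1}$.

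Concretely, I would start from Eq.~(\ref{eq34}),
\begin{equation*}
c_{n+1} = c_{n} + 2n\, m_{n-1},
\end{equation*}
which is valid for every integer $n\ge 1$, and replace $n$ by $n+2$. This gives
\begin{equation*}
c_{n+2} = c_{n+1} + 2(n+1)\, m_{n},
\end{equation*}
now valid for every integer $n\ge -1$, and in particular for all $n\ge 0$. Since $n+1\ge 1$, the coefficient $2(n+1)$ is nonzero, so one may divide through by it and solve for $m_n$, obtaining precisely Eq.~(\ref{eq39}). One should also record that Eq.~(\ref{eq39}) is consistent with the $n=0$ boundary data: $m_0=1$, $c_1=1$, $c_2=3$, so that $(c_2-c_1)/(2\cdot 1) = 1 = m_0$, which serves as a sanity check rather than part of the proof.

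There is essentially no obstacle here; the only point requiring a moment's care is the bookkeeping of index ranges, i.e.\ checking that after the substitution $n\mapsto n+2$ the resulting identity indeed covers all $n\ge 0$, so that the division by $2(n+1)$ is always legitimate. Everything else is routine rearrangement, and the corollary follows immediately from the Theorem.
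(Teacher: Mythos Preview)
Your approach is correct and is exactly how the paper intends the corollary to follow from Eq.~(\ref{eq34}). One small slip: the substitution you actually perform is $n\mapsto n+1$ (which turns $c_{n+1}=c_n+2n\,m_{n-1}$ into $c_{n+2}=c_{n+1}+2(n+1)m_n$), not $n\mapsto n+2$ as you wrote; correspondingly the resulting identity is valid for $n\ge 0$, not $n\ge -1$.
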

\begin{corollary}
Define the p-associated CTC ($p$ is an integer) in the following way:
\begin{equation}  \label{eq40}
c_{n}^{p} = n!{\sum\limits_{k = 0}^{{\left[ {{\frac{{n}}{{2}}}} \right]}} {{%
\frac{{1}}{{(n - 2k)!k!(k + p)!}}}}} \mathrm{.}
\end{equation}
Then, with help of identities Eqs.(\ref{eq36}) and (\ref{eq38}), we
obtain the generalized form of the formula Eq.(\ref{eq39}):
\begin{equation}  \label{eq41}
c_{n}^{p + 1} = {\frac{{c_{n + 2}^{p} - c_{n + 1}^{p}} }{{2(n + 1)}}}\mathrm{%
.}
\end{equation}
\end{corollary}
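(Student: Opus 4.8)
The plan is to run the proof of the Theorem above verbatim, but with the ordinary inverse derivative $\widehat{D}_x^{-1}$ replaced throughout by the $p$-associated operator $\widehat{D}_{x,p}^{-1}$ of Eq.(\ref{eq28}) (taken with $\alpha=p$). First I would introduce the $p$-associated hybrid polynomials $\Pi_n^{p}(x,y)=H_n(y,\widehat{D}_{x,p}^{-1})\mathbf{1}$, which by Eq.(\ref{eq29}) have the explicit form $\Pi_n^{p}(x,y)=n!\sum_{k=0}^{[n/2]} x^{k}y^{n-2k}/[(n-2k)!\,k!\,(k+p)!]$, so that $\Pi_n^{p}(1,1)=c_n^{p}$ in the notation of Eq.(\ref{eq40}); in particular $p=0$ recovers the CTC and $p=1$ the MN.

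The crucial step is the operational identity generalizing Eq.(\ref{eq37}):
\begin{equation*}
\widehat{D}_{x,p}^{-1}\,\Pi_n^{p}(x,y)=x\,\Pi_n^{p+1}(x,y)\mathrm{.}
\end{equation*}
I would prove it by writing $\Pi_n^{p}(x,y)=n!\sum_k y^{n-2k}\,\widehat{D}_{x,p}^{-k}\mathbf{1}/[k!(n-2k)!]$, noting that $\widehat{D}_{x,p}^{-1}$ shifts $\widehat{D}_{x,p}^{-k}\mathbf{1}\mapsto\widehat{D}_{x,p}^{-(k+1)}\mathbf{1}=x^{k+1}/\Gamma(k+p+2)=x^{k+1}/(k+p+1)!$, and recognizing the resulting series as $x$ times the defining sum of $\Pi_n^{p+1}(x,y)$. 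This bookkeeping about how the $p$-associated inverse derivative acts is the only genuinely nontrivial point; for $p=0$ it reduces to Eq.(\ref{eq37}).

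Next I would substitute $x\mapsto y$, $y\mapsto\widehat{D}_{x,p}^{-1}$ in the HKdF recurrence Eq.(\ref{eq35}), act on $\mathbf{1}$, and use the identity just established to obtain the $p$-associated analogue of Eq.(\ref{eq38}):
\begin{equation*}
\Pi_{n+1}^{p}(x,y)=\Pi_n^{p}(x,y)+2nx\,\Pi_{n-1}^{p+1}(x,y)\mathrm{.}
\end{equation*}
Finally, evaluating at $x=y=1$ and using $\Pi_m^{q}(1,1)=c_m^{q}$ gives $c_{n+1}^{p}=c_n^{p}+2n\,c_{n-1}^{p+1}$; replacing $n$ by $n+1$ and solving for $c_n^{p+1}$ yields Eq.(\ref{eq41}). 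The expected main obstacle is precisely the verification of the shifted-index identity in the second paragraph — everything after it is a direct transcription of the proof of the Theorem, and a quick check at $p=0$ confirms consistency with Eq.(\ref{eq39}).
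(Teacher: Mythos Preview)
Your proposal is correct and follows precisely the route the paper indicates: the paper gives no separate proof for this Corollary beyond the hint ``with help of identities Eqs.(\ref{eq36}) and (\ref{eq38})'', and your argument is exactly the $p$-associated version of that derivation---replace $\widehat{D}_x^{-1}$ by $\widehat{D}_{x,p}^{-1}$, establish the analogue $\widehat{D}_{x,p}^{-1}H_n(y,\widehat{D}_{x,p}^{-1})\mathbf{1}=x\,\Pi_n^{p+1}(x,y)$ of Eq.(\ref{eq37}), deduce the recurrence $\Pi_{n+1}^{p}(x,y)=\Pi_n^{p}(x,y)+2nx\,\Pi_{n-1}^{p+1}(x,y)$, and specialize to $x=y=1$. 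The index shift at the end is also right: $n\mapsto n+1$ turns $c_{n+1}^{p}=c_n^{p}+2n\,c_{n-1}^{p+1}$ into $c_{n+2}^{p}=c_{n+1}^{p}+2(n+1)\,c_{n}^{p+1}$, which is Eq.(\ref{eq41}).
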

\noindent Note that for $p>1$, the p-associated CTC $c_{n}^{p}$ are not
integers. For example, the first 11 $c_{n}^{p}$ numbers ($n=0\ldots 10$) for
$p=0,1,2$ are listed in Table 1.

\bigskip

\begin{center}
\begin{tabular}{|c|c|c|c|}
\hline
$n$ & $c_{n}^{0}$ & $c_{n}^{1}$ & $6 \cdot c_{n}^{2}$ \\ \hline
0 & 1 & 1 & 3 \\ \hline
1 & 1 & 1 & 3 \\ \hline
2 & 3 & 2 & 5 \\ \hline
3 & 7 & 4 & 9 \\ \hline
4 & 19 & 9 & 18 \\ \hline
5 & 51 & 21 & 38 \\ \hline
6 & 141 & 51 & 84 \\ \hline
7 & 393 & 127 & 192 \\ \hline
8 & 1107 & 323 & 451 \\ \hline
9 & 3139 & 835 & 1083 \\ \hline
10 & 8953 & 2188 & 2649 \\ \hline
\end{tabular}%
\end{center}

$\underset{}{%
\begin{array}{c}
\text{\ Table 1. The p-associated CTC }c_{n}^{p}\text{ for }
n=0,1,2,\ldots ,10\text{ and }p=0,1,2. \\
\text{In the second column, \textit{i.e.}, for}~ p=1
\text {we have the usual Motzkin numbers}.%
\end{array}%
}$

\bigskip

\bigskip 

\noindent Before concluding this paper, we will add the following note on the
further generalization of the CTC and MN as a consequence of the approach
developed in the present work.
\begin{definition}
\\
The $m^{th}$ order p-associated CTC are defined as follows:
\begin{equation}  \label{eq42}
{}_{m}c_{n}^{p} = n!{\sum\limits_{k = 0}^{{\left[ {{\frac{{n}}{{m}}}} \right]%
}} {{\frac{{1}}{{(n - mk)!k!(k + p)!}}}}} \mathrm{.}
\end{equation}
\end{definition}
\noindent The above defined family of central trinomial coefficients is
linked to the higher order hybrid polynomials. Their properties can be
explored along the lines developed above. We just note, that they satisfy
the following recurrence:
\begin{equation}  \label{eq43}
{}_{m}c_{n + 1}^{p} = {}_{m}c_{n}^{p} + m{\frac{{n!}}{{(n - m + 1)!}}}%
{}_{m}c_{n - m + 1}^{p} \mathrm{,}
\end{equation}
which is a straighforward generalization of Eq.(\ref{eq34}). 
Observe that Eqs.(\ref{eq39}), (\ref{eq41}) and (\ref{eq43}) are simple recurrences 
that clearly share common structure revealing inherent connection between $c_n$, $c_n^p$ and $_mc_n^p$.

\section*{Discussion}

\bigskip

In the present work we have reinterpreted the central trinomial coefficients
and Motzkin numbers employing the general formalism, which underlies the
theory of the hybrid polynomials. The analogous results could be achieved,
using properties of the hypergeometric functions. In fact, using Eq.(\ref{eq3}) 
and the definition of the hypergeometric function $_pF_q$, see \cite{[11]}, the following representation is valid
\begin{eqnarray}\label{HF}
H_n(x,y)=x^n\ _2F_0\left(-\frac{n}{2},\frac{1-n}{2};\frac{4y}{x}\right),
\end{eqnarray}
where $_2F_0$ is the hypergeometric function. Most of the results of this paper
may also be derived from this observation.
\\
Even though we have referred to the coefficients ${}_{m}c_{n}^{p},m>2,p>0$
as \textquotedblleft central trinomial\textquotedblright , they do not have
the same interpretation as in the case\footnote{
The coefficients of $x^{n}$ of the expansion $(1+x+x^{m})^{n}$ are $%
{}_{m}d_{n}=n!{\sum\limits_{k=0}^{{\left[ {{\frac{{n}}{{m}}}}\right] }}{{%
\frac{{1}}{{k!((m-1)k)!(n-mk)!}}}}}$ and their properties can be also framed
within the context of the properties of the hybrid polynomials.} $p=0,m=2$.
We have noted that for $p=0,m=1$, the CTC produce the Motzkin numbers. Thorough
discussion of their combinatorial interpretation is intended for future investigations. 
\\
Since through Eq.(\ref{eq12}) all the findings of this paper are related to the HKdF polynomials $H_n(x,y)$
it seems legitimate to look for their combinatorial interpretation.
We just point that for a large class of arguments $x,y$ of $H_n(x,y)$ the resulting integer sequences 
can be given a precise representation which may be helpful in searching a combinatorial interpretation of CTC.
We quote two examples of such situation:
\\
a) For $x=1$, $y=1/2$ we have $H_n(1,1/2)=\ _2F_0\left(-\frac{n}{2},\frac{1-n}{2};2\right)$ which generate $1,1,2,4,10,26,76,232,...$, for $n=0,1,2,...$\ . 
They are called involution numbers, see A000085 in \cite{[12]}, whose classical combinatorial interpretation 
is the number of partitions of a set of $n$ distinguishable objects into subsets of size one and two. 
This sequence counts also permutations consisting exclusively of fixed points and transpositions.
\\
b) Another example is supplied by the choice $x=y=1/2$; then the quantity 
$2^nH_n(1/2,1/2)=\ _2F_0\left(-\frac{n}{2},\frac{1-n}{2};8\right)$ furnishes the following integer sequence: $1,1,5,13,73,281,1741,..$ for $n=0,1,2,...$, see A115329 in \cite{[12]}. 
It counts the number of partitions of a set into subsets of size one and two with additional feature that
the constituents of a set of size two acquire two colors. 
\\
Many other instances of such combinatorial interpretations may be given by judicious choices of 
parameters $x$ and $y$ in Eq.(\ref{HF}).

\end{document}